\newtheorem{definition}{Definition }
\newtheorem{theorem}[definition]
{Theorem }
\newtheorem{corollary}[definition]
{Corollary}
\newtheorem{prop}[definition]{Proposition}
\newcommand{\lgw}{\longrightarrow}
\newcommand{\ovl}{\overline}
\newcommand{\ord}{\text{ord}}
\newcommand{\wdh}{\widehat}
\newcommand{\ini}{\text{in}}
\renewcommand{\l}{\lambda}
\renewcommand{\O}{\mathcal{O}}
\newcommand{\mfk}{\mathfrak}
\newcommand{\m}{\mathfrak{m}}
\renewcommand{\k}{\Bbbk}
\newcommand{\N}{\mathbb{N}}
\renewcommand{\a}{\alpha}
\renewcommand{\b}{\beta}
\renewcommand{\phi}{\varphi}
\newcommand{\e}{\varepsilon}
\begin{document}
\title{Asymptotic behaviour  of standard bases}
\author{Guillaume ROND}
\address{Institut de Math\'ematiques de Luminy\\
Universit\'e Aix-Marseille 2\\
Campus de Luminy, case 907\\
13288 Marseille cedex 9}
\email{rond@iml.univ-mrs.fr}
\subjclass[2000]{13H99, 13C99}

\begin{abstract}We prove here that the elements of any standard basis of $I^n$, where $I$ is an ideal of a Noetherian local ring and $n$ is a positive integer, have order bounded by a linear function in  $n$. We deduce from this that the elements of any standard basis of $I^n$ in the sense of Grauert-Hironaka, where $I$ is an ideal of the ring of power series, have order bounded by a polynomial function in $n$.\end{abstract}
\maketitle
The aim of this note is to study the growth of the orders of the elements of a standard basis of $I^n$, where $I$ is an ideal of a Noetherian local ring. We show here that the maximal order of an element of a standard basis of $I^n$ is bounded by a linear function in $n$. For this we prove a linear version of the strong Artin-Rees lemma for ideals in a Noetherian ring. The main result of this note is theorem \ref{main}.\\
First we prove the following proposition inspired by corollary 3.3 of \cite{P-V}:\\
\begin{prop}\label{princ}
Let $A$ be a Noetherian ring and let $I$  and $J$ be ideals of $A$. There exists an integer $\l\geq 0$ such that
$$\forall x\in A, \forall n,m\in\N,\, n\geq \l m,\ \ (x^n)\cap (J+I^m)=((x^{\l m})\cap (J+I^m))(x^{n-\l m}).$$
\end{prop}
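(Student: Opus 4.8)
The plan is to translate the statement into colon ideals and then to prove a single, uniform and linear, Artin--Rees bound.

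First I would rewrite both sides in terms of colons. For every $x\in A$ and every $n\geq\l m$ one has $(x^n)\cap(J+I^m)=x^n\big((J+I^m):x^n\big)$, and likewise $\big((x^{\l m})\cap(J+I^m)\big)(x^{n-\l m})=x^n\big((J+I^m):x^{\l m}\big)$. Since $\l m\leq n$ forces $(J+I^m):x^{\l m}\subseteq(J+I^m):x^n$, the inclusion from right to left in the Proposition holds for every $\l$, and the whole identity is equivalent to $x^n\big((J+I^m):x^n\big)=x^n\big((J+I^m):x^{\l m}\big)$. It therefore suffices to produce one $\l$ for which the colon ideals themselves already coincide, i.e. $(J+I^m):x^n=(J+I^m):x^{\l m}$ for all $x$ and all $n\geq\l m$; equivalently, the $(x)$-adic Artin--Rees number of the submodule $J+I^m\subseteq A$ is bounded by $\l m$, uniformly in $x$. (One may also pass to $A/J$ and assume $J=0$, since membership $ax^n\in J+I^m$ is detected modulo $J$.)

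Next I would fix $m$ and extract the uniformity in $x$ from primary decomposition. If $K=\bigcap_i Q_i$ is an irredundant primary decomposition with $\mathfrak{p}_i=\sqrt{Q_i}$ and $e_i$ the nilpotency index ($\mathfrak{p}_i^{e_i}\subseteq Q_i$), then for $c\geq e_i$ one has $Q_i:x^c=A$ if $x\in\mathfrak{p}_i$ and $Q_i:x^c=Q_i$ if $x\notin\mathfrak{p}_i$; intersecting shows $K:x^c=K:x^\infty$ for \emph{every} $x$ as soon as $c\geq\max_i e_i$. Thus the colon stabilises uniformly in $x$ at the maximal nilpotency index $e(K):=\max_i e_i$, and the problem collapses to the purely ideal-theoretic estimate $e(J+I^m)\leq\l m$.

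The heart of the matter, and the step I expect to be the main obstacle, is this linear growth of the nilpotency indices. The set $\bigcup_m\operatorname{Ass}\big(A/(J+I^m)\big)$ is finite (Brodmann), so it is enough to bound, for each such prime $\mathfrak{p}$, the nilpotency index of the $\mathfrak{p}$-primary component of $J+I^m$ linearly in $m$, an index unchanged by localising at $\mathfrak{p}$. For $\mathfrak{p}$ minimal over $J+I$ this is immediate: from $(J+I)^m\subseteq J+I^m$ and $(\mathfrak{p}A_{\mathfrak{p}})^{e_0}\subseteq(J+I)A_{\mathfrak{p}}$ one gets $(\mathfrak{p}A_{\mathfrak{p}})^{e_0 m}\subseteq(J+I^m)A_{\mathfrak{p}}$. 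The genuine difficulty lies with the embedded primes, where localisation retains a smaller prime and the component is not recovered. Writing $(R,\mathfrak{n})=(A_{\mathfrak{p}},\mathfrak{p}A_{\mathfrak{p}})$, what must be shown is that $\mathfrak{n}^{\l m}$ annihilates the finite-length module $H^0_{\mathfrak{n}}\big(R/(J+I^m)R\big)=\big((J+I^m)R:\mathfrak{n}^\infty\big)/(J+I^m)R$; crucially one must bound its \emph{annihilator exponent}, not its length, which is typically only polynomial in $m$. I would establish this by realising the relevant top-degree (socle) data of the family $\{(J+I^m)R\}_m$ as governed by a finitely generated graded object over the Rees algebra $A[It]$, so that the socle degree grows linearly in $m$ --- the local counterpart of the linearity of the regularity of powers, and precisely the technical core already isolated in Corollary 3.3 of \cite{P-V}. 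Taking $\l$ to be the maximum of $e_0$ and the finitely many embedded bounds (and absorbing the finitely many small values of $m$) then yields the colon equality, hence the Proposition.
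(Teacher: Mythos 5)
Your opening reductions are correct, and they are in fact the same skeleton as the paper's own proof. The identity $(x^n)\cap K=x^n(K:x^n)$ reduces the proposition to the colon stabilization $(J+I^m):x^n=(J+I^m):x^{\lambda m}$ for all $x$ and all $n\geq\lambda m$ (the paper obtains this implication by citing remark 2 (1) and theorem 2 of \cite{P-V}; your direct computation is a perfectly good elementary replacement), and your observation that an irredundant primary decomposition of $J+I^m$ whose components satisfy $(\sqrt{Q_i})^{\lambda m}\subseteq Q_i$ forces that stabilization uniformly in $x$ is exactly the computation carried out in the paper. Passing to $A/J$ at the level of colons is also fine.

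The gap is in your last paragraph. The statement you have reduced to --- for every $m$, the image of $I^m$ in $A/J$ admits a primary decomposition whose nilpotency exponents are at most $\lambda m$, with $\lambda$ independent of $m$ --- is precisely Swanson's theorem, \cite[Theorem 3.4]{Sw}, and the paper simply invokes it at this point. You attempt to prove it instead: the minimal primes are indeed easy (your argument there is correct), but for the embedded primes you offer only an uncarried-out sketch, and that case is not a technicality; it \emph{is} the theorem, whose known proof is a nontrivial induction via the extended Rees algebra. Your pointer to corollary 3.3 of \cite{P-V} as containing this core is misplaced: that corollary supplies uniformity in $x$ for a \emph{fixed} pair of modules, not the linear-in-$m$ control across the varying family $\{J+I^m\}_m$, which is the actual content of the proposition (it inspired the statement, not the bound on primary components). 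There is also a secondary soundness issue in the sketch itself: even granting a bound $\mathfrak{n}^{\lambda m}H^0_{\mathfrak{n}}\bigl(R/(J+I^m)R\bigr)=0$, converting it into an $\mathfrak{n}$-primary component of the form $(J+I^m)R+\mathfrak{n}^c$ with $c$ linear in $m$ requires an Artin--Rees constant for the module $R/(J+I^m)R$ that is itself uniform in $m$ --- which is the very kind of uniformity being proved, so the plan as written risks circularity. Replacing your final paragraph by an appeal to \cite[Theorem 3.4]{Sw}, applied in $A/J$, closes the gap and recovers the paper's proof.
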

\begin{proof}
 Let $B:=A/J$. By theorem 3.4 of \cite{Sw}, there exists $\l$ such that, for any $m\geq 1$, there exists an irredundant primary decomposition $I^m=Q_1^{(m)}\cap\cdots\cap Q_r^{(m)}$ such that, if $P_i^{(m)}:=\sqrt{Q_i^{(m)}}$, then $(P_i^{(m)})^{\l m}\subset Q_i^{(m)}$ for $1\leq i\leq m$. We denote by $\ovl{Q}_i^{(m)}$ the image of $Q_i^{(m)}$ in $A/(J+I^m)$ for $1\leq i\leq r$. We denote by $\mathfrak{P}_i^{(m)}$ the inverse image of $P_i^{(m)}$ in $A$, for $1\leq i\leq r$.\\
Let $x\in A$. If $x\in \mfk{P}_i^{(m)}$, then $x^{n}\in (\mfk{P}_i^{(m)})^n$  and $(\ovl{Q_i}^{(m)}:x^n)=A/(J+I^m)$ for any $n\geq \l m$. If $x\notin \mfk{P}_i^{(m)}$, then $x^n\notin (\mfk{P}_i^{(m)})^n$ and $(\ovl{Q}_i^{(m)}:x^n)=\ovl{Q}_i^{(m)}$ for any $n\geq \l m$.
Thus, for any $n\geq \l m$:
$$\left(0_{A/(J+I^m)}:x^n\right)=\left(\bigcap_i\ovl{Q}_i^{(m)}:x^n\right)=\bigcap_i\left(\ovl{Q}_i^{(m)}:x^n\right)=\bigcap_{i\ /\ x\notin P_i}\ovl{Q}_i^{(m)}.$$
Hence, by remark 2 (1) of \cite{P-V} and theorem 2 of \cite{P-V}, we get the result.
 \end{proof}
Using the extended Rees algebra of $\mfk{a}$ to reduce to the principal case (as done in \cite{Sw}) we prove the following corollary:
\begin{corollary}\label{A-R}
Let $A$ be a Noetherian ring and let $I$, $J$ and $\mfk{a}$ be ideals of $A$. Then there exists $\l\geq 0$ such that
$$(J+I^m)\cap \mfk{a}^n=((J+I^m)\cap \mfk{a}^{\l m}) \mfk{a}^{n-\l m}$$
\end{corollary}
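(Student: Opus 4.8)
The plan is to pass to the extended Rees algebra of $\mfk{a}$, so as to trade the ideal $\mfk{a}$ for a single element and then quote Proposition \ref{princ}. Set
$$R:=A[\mfk{a}t,t^{-1}]=\bigoplus_{i\in\Z}\mfk{a}^i t^i\subset A[t,t^{-1}],$$
with the convention $\mfk{a}^i=A$ for $i\leq 0$. As $A$ is Noetherian and $\mfk{a}$ is finitely generated, $R$ is a finitely generated $A$-algebra, hence Noetherian, and it is graded with $R_i=\mfk{a}^i t^i$. The element $u:=t^{-1}$ is homogeneous of degree $-1$ and is a non-zerodivisor, multiplication by $t^{-1}$ being injective already on $A[t,t^{-1}]$. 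What makes this useful is the dictionary relating $\mfk{a}$-adic data on $A$ to the principal element $u$ on $R$: for any ideal $N$ of $A$ the extension $NR$ is homogeneous with degree-$i$ part $N\mfk{a}^i t^i$, while $(u^k)=u^kR$ is homogeneous with degree-$i$ part $\mfk{a}^{i+k}t^i$; in particular their degree-$0$ parts are $N$ and $\mfk{a}^k$.

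I would then apply Proposition \ref{princ} to $R$ with the ideals $IR$ and $JR$. Since extension commutes with sums and powers, $(IR)^m=I^mR$ and $JR+(IR)^m=(J+I^m)R$, so the proposition gives $\l\geq 0$ such that, choosing for the universally quantified element the specific $u=t^{-1}$, for all $n\geq\l m$
$$(u^n)\cap(J+I^m)R=((u^{\l m})\cap(J+I^m)R)(u^{n-\l m}).$$
This is the Artin--Rees equality in $R$ for the principal ideal $(u)$, and its essential feature is that the bound $\l m$ is linear in $m$.

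Finally I would descend to $A$ by taking degree-$0$ homogeneous parts, which is legitimate as all the ideals involved are homogeneous. By the dictionary the left-hand side returns $(J+I^m)\cap\mfk{a}^n$. For the right-hand side one computes that the degree-$(n-\l m)$ part of $(u^{\l m})\cap(J+I^m)R$ has coefficient $\mfk{a}^n\cap(J+I^m)\mfk{a}^{n-\l m}$, and that multiplication by $u^{n-\l m}$ (a non-zerodivisor) carries exactly this back into degree $0$. Thus the descent delivers, for $n\geq\l m$,
$$(J+I^m)\cap\mfk{a}^n=(J+I^m)\mfk{a}^{n-\l m}\cap\mfk{a}^n.$$

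The step I expect to be the main obstacle is reconciling this intersection form with the product form $((J+I^m)\cap\mfk{a}^{\l m})\mfk{a}^{n-\l m}$ appearing in the statement. One inclusion, $((J+I^m)\cap\mfk{a}^{\l m})\mfk{a}^{n-\l m}\subset(J+I^m)\mfk{a}^{n-\l m}\cap\mfk{a}^n$, is immediate; the reverse — that an element of $(J+I^m)\mfk{a}^{n-\l m}$ which happens to lie in $\mfk{a}^n$ can be rewritten with its $(J+I^m)$-factors already in $\mfk{a}^{\l m}$ — is the real content and does not follow from degree bookkeeping alone. I would secure it by studying the graded ideal $\bigoplus_k((J+I^m)\cap\mfk{a}^k)t^k$ of the Rees algebra $A[\mfk{a}t]$: it is finitely generated because that algebra is Noetherian, and its finite generation is precisely the product form, the point being to bound its generating degrees by $\l m$ uniformly in $m$. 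Matching these two descriptions is the delicate heart of the proof.
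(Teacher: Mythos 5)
Your construction coincides, step for step, with the paper's own proof: the same extended Rees algebra $B=A[\mfk{a}t,t^{-1}]$, the same application of Proposition \ref{princ} to the ideals $IB$, $JB$ with the element $u=t^{-1}$, and the same descent by taking degree-zero parts of homogeneous ideals. Moreover your bookkeeping is correct: since the degree-$(n-\l m)$ component of $(u^{\l m})\cap(J+I^m)B$ is $\bigl(\mfk{a}^{n}\cap(J+I^m)\mfk{a}^{n-\l m}\bigr)t^{n-\l m}$, the descent really yields
$$(J+I^m)\cap\mfk{a}^{n}=(J+I^m)\mfk{a}^{n-\l m}\cap\mfk{a}^{n},$$
the \emph{intersection} form. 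At this exact spot the published proof simply writes ``Thus $(J+I^m)\cap \mfk{a}^n\subset ((J+I^m)\cap \mfk{a}^{\l m})\mfk{a}^{n-\l m}$'' and stops; that is, it asserts the passage from the intersection form to the product form with no further argument. So you have faithfully reproduced the paper's route and, to your credit, isolated its one genuinely delicate jump.

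As submitted, however, your proposal is not a proof, because the deferred step is never carried out and the repair you sketch is circular. Generation of the graded ideal $\bigoplus_k\bigl((J+I^m)\cap\mfk{a}^k\bigr)t^k$ in degrees $\leq d$ is \emph{equivalent} to the identity $(J+I^m)\cap\mfk{a}^n=\bigl((J+I^m)\cap\mfk{a}^{d}\bigr)\mfk{a}^{n-d}$ for all $n\geq d$: Noetherianity of $A[\mfk{a}t]$ gives finite generation (this is just the classical Artin--Rees lemma, with a bound $d_m$ depending on $m$), and bounding the generating degrees by $\l m$ uniformly in $m$ is literally the corollary to be proved, so no new tool is offered. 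Note also what the Rees-algebra identity from Proposition \ref{princ} actually encodes: since $(u^n)\cap(J+I^m)B=u^n\bigl((J+I^m)B:u^n\bigr)$ and $u$ is a non-zerodivisor, it is exactly the stabilization $\bigl((J+I^m)B:u^n\bigr)=\bigl((J+I^m)B:u^{\l m}\bigr)$ for $n\geq\l m$, which upon descent is equivalent to the \emph{weak} Artin--Rees inclusion $(J+I^m)\cap\mfk{a}^n\subseteq(J+I^m)\mfk{a}^{n-\l m}$; the corollary asserts the \emph{strong} form, and the weak form does not formally imply the strong one --- that distinction is precisely the subject of \cite{P-V}. The gap matters downstream, since the proof of Theorem \ref{main} uses the product form in an essential way (it needs the elements $g_{i,k}$ to lie in $(J+I^n)\cap\m^{\l n}$). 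So the step you left open is a genuine gap in your proposal --- though, it must be said, it is a step the paper's own proof does not fill either.
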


\begin{proof}
Let $B:=A[\mfk{a} t,t^{-1}]$. Then $t^{-n}B\cap A=\mfk{a}^n$. By proposition \ref{princ}, there exists $\l\geq 1$ such that for any $n,m\in\N$, $n\geq \l m$ , $(t^{-n})\cap(J+ I^m)=((t^{-\l m})\cap (J+I^m))(t^{-(n-\l m)})$.
We have $$(J+I^m)\cap \mfk{a}^n=((t^{-m})\cap (J+I^m)B)\cap A=\left(((t^{-\l m})\cap (J+I^m))(t^{-(n-\l m)})\right)\cap A.$$
Thus $(J+I^m)\cap \mfk{a}^n\subset ((J+I^m)\cap \mfk{a}^{\l m})\mfk{a}^{n-\l m}$. The reverse inclusion is clear.
\end{proof}
Let $(A,\m)$ be a Noetherian local ring and $I$ be an ideal of $A$. Let us denote by $G(A/I)$ the associated graded ring of $A/I$ with respect to $\m$. Then $G(A/I)=G(A)/I^*$ where $I^*\subset G(A)$ is the graded ideal of $G(A)$ generated by the elements $f^*$ with $f\in I$, where $f^*$ is the leading form of $f$: if $\ord(f):=\sup\{k,\ f\in \m^k\}=d$, then $f^*=f+\m^{d+1}$. Finally $f_1,...,f_p$ form a \textit{(minimal) standard basis} of $I$ if $f_1^*,...,f_p^*$ form a (minimal) generating set of $I^*$. It is clear that  $(I^*)^n$ is included in $(I^n)^*$, but both ideals are not equal in general. For example, if $I=(x^2,y^3-xy)\subset \k[[x,y]]$ where $\k$ is a field, then $(I^n)^*=((xy,x^2)^n,\{x^iy^{4n-3i+1}\}_{0\leq i\leq n-1})$, hence $y^{4n+1}\in (I^n)^*\backslash(I^*)^n$ \cite{C-H-S}.  Nevertheless we have the following theorem, whose proof is inspired by the link made in \cite{B-M} between the Artin-Rees lemma and the orders of the elements of a standard basis, with respect to a monomial order, of an ideal in the ring of formal power series over a field (see also \cite{Wa}):

\begin{theorem}\label{main}
Let $I$ be an ideal of a Noetherian local ring $(A,\m)$. Then there exists an integer $\l\geq 0$ such that for any integer $n\geq 0$ and any minimal standard basis $f_1,...,f_{p_n}$ of $ I^n$ we have $\ord(f_i)\leq \l n$ for $1\leq i\leq p_n$.
\end{theorem}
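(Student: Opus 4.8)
The plan is to recast the statement in terms of the graded ideal $(I^n)^*\subset G(A)$. By definition $\ord(f_i)=\deg(f_i^*)$, and $f_1^*,\dots,f_{p_n}^*$ is a minimal homogeneous system of generators of $(I^n)^*$. Since $G(A)$ is a Noetherian graded ring, generated over $G(A)_0=A/\m$ by $G(A)_1=\m/\m^2$, graded Nakayama tells us that the number of minimal generators sitting in degree $d$ equals $\dim_{A/\m}\big[(I^n)^*/G(A)_+(I^n)^*\big]_d$, where $G(A)_+=\bigoplus_{k\geq 1}\m^k/\m^{k+1}$ is the irrelevant ideal; in particular the multiset of orders of a minimal standard basis of $I^n$ is an invariant of $I^n$. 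It therefore suffices to bound, linearly in $n$, the largest $d$ for which this quotient does not vanish.

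Next I would identify the relevant graded pieces. One has the standard identification $\big[(I^n)^*\big]_d=\big((I^n\cap\m^d)+\m^{d+1}\big)/\m^{d+1}$, the image of $I^n\cap\m^d$ in $\m^d/\m^{d+1}$; and, because $G(A)$ is generated in degree one, $\big[G(A)_+(I^n)^*\big]_d=G(A)_1\cdot\big[(I^n)^*\big]_{d-1}$ is the image of $\m(I^n\cap\m^{d-1})$ in the same space. Hence $(I^n)^*$ has a minimal generator in degree $d$ if and only if $I^n\cap\m^d\not\subset\m(I^n\cap\m^{d-1})+\m^{d+1}$; for the proof I only need the easy direction, namely that equality of these graded pieces forces the absence of a generator in degree $d$.

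The key step is to feed Corollary \ref{A-R} into this criterion, taking $J=(0)$ and $\mfk{a}=\m$. This yields an integer $\l\geq 0$, \emph{independent of $n$}, such that $I^n\cap\m^d=(I^n\cap\m^{\l n})\,\m^{d-\l n}$ for every $d\geq\l n$. For any $d\geq\l n+1$ I apply this identity to both $d$ and $d-1$ and multiply the second by $\m$, getting $\m(I^n\cap\m^{d-1})=(I^n\cap\m^{\l n})\,\m^{d-\l n}=I^n\cap\m^d$. Thus $\big[(I^n)^*\big]_d=G(A)_1\cdot\big[(I^n)^*\big]_{d-1}=\big[G(A)_+(I^n)^*\big]_d$, so there is no minimal generator of $(I^n)^*$ in degree $d>\l n$. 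Consequently every element of a minimal standard basis of $I^n$ has order at most $\l n$, as required (the case $n=0$ being trivial since $(I^0)^*=G(A)$).

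The substantive obstacle is entirely in the first two paragraphs: one must recognize that the top order occurring in a minimal standard basis of $I^n$ is exactly the top degree of a minimal generator of $(I^n)^*$, and then rephrase minimal generation in degree $d$ through the stabilization $I^n\cap\m^d=\m(I^n\cap\m^{d-1})$. Once this is done, the whole force of the argument is the \emph{linearity in $n$} of the Artin--Rees constant supplied by Corollary \ref{A-R}; an ordinary Artin--Rees lemma would only give a bound on the order depending on $n$ in an uncontrolled way. The minor points to check are that $G(A)$ is standard graded over $A/\m$ (used to write $G(A)_+(I^n)^*$ via multiplication by $G(A)_1$) and that $\l$ is uniform in $n$, both of which hold here.
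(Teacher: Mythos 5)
Your proof is correct, and it takes a genuinely different route from the paper's, even though both arguments draw all of their strength from the same key lemma, namely Corollary \ref{A-R} applied with $\mfk{a}=\m$ (and $J=(0)$ in your case). The paper first reduces to a regular local ring: it passes to $\wdh{A}$ and writes it as $B/J$ with $B$ regular by Cohen's structure theorem, which forces it to carry an auxiliary ideal $J$ and to bound standard bases of $J+I^n$ throughout. That reduction is not cosmetic there: the paper argues by contradiction via explicit representations $f_i=\sum_k \e_{i,k}\bigl(\sum_l \eta_{i,k,l}f_l\bigr)$, and the crucial control $\eta_{i,k,l}\in\m^{\l n-r_l}$ on the coefficients is justified by $G(A)$ being an integral domain, which holds for regular $A$ but not in general; it then needs a unit inversion and Gaussian elimination to contradict minimality of $f_1^*,\dots,f_{p_n}^*$. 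You bypass all of this by working with the graded module $(I^n)^*/G(A)_+(I^n)^*$ over the standard graded ring $G(A)$: graded Nakayama (valid since $G(A)_0=A/\m$ is a field) says the degrees of any minimal homogeneous generating set are read off from this quotient, and the identity $I^n\cap\m^d=(I^n\cap\m^{\l n})\m^{d-\l n}$, applied at $d$ and at $d-1$ and combined with your identification $\left[(I^n)^*\right]_d=\bigl((I^n\cap\m^d)+\m^{d+1}\bigr)/\m^{d+1}$, kills every graded piece of the quotient in degrees $d>\l n$. What this buys is substantial: no completion, no structure theorem, no auxiliary ideal $J$, no domain hypothesis on $G(A)$, and no element-by-element elimination; the argument works verbatim in any Noetherian local ring and makes transparent that the only real input is the linearity in $n$ of the Artin--Rees constant $\l$. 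The steps you flag as needing verification (that $G(A)$ is standard graded, that minimal standard bases correspond to Nakayama-minimal homogeneous generating sets, and the description of $\left[G(A)_+(I^n)^*\right]_d$ as the image of $\m(I^n\cap\m^{d-1})$) are all correct and standard, so I see no gap.
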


\begin{proof}
 The canonical morphism $A\lgw \wdh{A}$ is injective and $G(A/I)=G(\wdh{A/I})$. Thus we may assume that $A$ is complete. Then $A$ is of the form $B/J$ where $B$ is a regular local ring and $J$ is an ideal of $B$. Hence we may assume that $A$ is a regular local ring, $I$ and $J$ are ideals of $A$, and we need to prove that there exists $\l\geq 0$ such that for any minimal standard basis $f_1,...,f_{p_n}$ of $J+ I^n$ we have $\ord(f_i)\leq \l n$ for $1\leq i\leq p_n$.\\
Let us assume that $I+J\neq (0)$. Let $n\in\N^*$ and let $f_1,...,f_{p_n}\in J+I^n$  such that $f_1^*,...,f_{p_n}^*$ form a minimal generating set of $(J+I^n)^*$ (in particular $(f_1,...,f_{p_n})=J+I^n$). Let us denote by $r_i$ the integer $\ord(f_i)$, $1\leq i\leq p_n$ and let us assume that $r_1\leq r_2\leq \cdots\leq r_{p_n}$. Let $\l\geq 0$ satisfying corollary \ref{A-R} with $\mfk{a}=\m$. Let $q\geq 0$ such that $r_i\leq \l n$ for $i\leq q$ and $r_i>\l n$ for $i>q$. It is enough to show that $q=p_n$. Let us assume that $q<p_n$. If $q=0$, then $f_i\in (J+I^n)\cap \m^{r_{i}}=((J+I^n)\cap \m^{\l n})\m^{r_{i}-\l n}\subset (J+I^n)\m$, $1\leq i\leq p_n$. Hence $(J+I^n)=\m(J+ I^n)$, and $(J+I^n)=(0)$ by Nakayama, which is a contradiction. Thus $q\geq 1$. For $i>q$ we have $f_{i}\in (J+I^n)\cap \m^{r_{i}}=((J+I^n)\cap \m^{\l n})\m^{r_{i}-\l n}$. Thus, for $q+1\leq i\leq p_n$,
  $f_i=\sum_k \e_{i,k}g_{i,k}$ with $g_{i,k}\in (J+I^n)\cap \m^{\l n}$ $\e_{i,k}\in\m^{r_i-\l n}$, for $q<i\leq p_n$ and any $k$.  Hence
  $f_i=\sum_k \e_{i,k}\left(\sum_{1\leq l\leq p_n}\eta_{i,k,l}f_l\right)$
  with  $\eta_{i,k,l}\in\m^{\l n-r_l}$, for any $i,k,l$ (because $f_1^*,...,f_{p_n}^*$ generate $(J+I^n)^*$ and $G(A)$ is an integral domain). Thus, for $q<i\leq p_n$,
  $$f_{i}=(1-\sum_k\e_{i,k}\eta_{i,k,i})^{-1}\sum_k\e_{i,k}\left(\sum_{l\neq i}\eta_{i,k,l}f_l\right).$$
  Then 
  $f_i\in\sum_{l\neq i}f_l\m^{r_{i}-r_l}$
  for $q+1\leq i\leq p_n$. By Gaussian elimination we see that 
    $$f_i\in\sum_{l< i}f_l\m^{r_{i}-r_l}\text{  for }q+1\leq i\leq p_n.$$
    This means that $f_i^*\in(f_1^*,...,f_{i-1}^*)G(A)$ which contradicts the fact that $f_1^*,...,f_{p_n}^*$ form a minimal generating set of $(J+I^n)^*$.  
  \end{proof}
 Let $\O_s:=\k[[x_1,...,x_s]]$ where $\k$ is a field or $\O_s:=\k\{x_1,....,x_s\}$ where $\k$ is a valued field. We denote by $\m$ its maximal ideal. For all $\a\in\N^s$ let us denote  $|\a|:=\a_1+\cdots+\a_s$.  We define a total order on $\N^s$ in the following way: $\a> \b$ if $(|\a|,\a_1,...,\a_s)>_{lex}(|\b|,\b_1,...,\b_s)$ for all $\a,\b\in\N^s$. This induces a total order on the monomials of $\O_s$ in the following way: $x^{\a}>x^{\b}$ if $\a>\b$ for all $\a,\b\in\N^s$. If $f=\sum_{\a\in\N^s}f_{\a}x^{\a}\in \O_s$ let us denote by $\ini_>(f)$ the element $f_{\a}x^{\a}$ such that $\a<\b$ for all $\b\neq \a$ such that $f_{\b}\neq 0$. If $\ini_>(f)=f_{\a}x^{\a}$ let us denote by $\exp(f)$ the element $\a\in\N^s$. Let $I$ be an ideal of $\O_s$; we say that  $(f_1,...,f_p)$ is a \textit{(minimal) standard basis of $I$ with respect to this order} if $\{\exp(f_1),...,\exp(f_p)\}$ is a (minimal) set of generators of the semigroup $\{\exp(g), g\in I\}$ (in particular  $(f_1,...,f_p)=I$). We denote $\a_i:=\exp(f_i)$ for all $i$. We may always assume that $|\a_1|\leq \cdots\leq|\a_p|$. In this case, for $l\in \N$ we  define $q(l)\in\N$ by $\a_{q(l)}\leq l$ and $\a_{q(l)+1}>l$ where $q(l)=0$ if $l<|\a_1|$ and $q(l)=p$ if $l\geq |\a_p|$. 
We have the following result:
\begin{prop}\label{b-m}\cite{Wa}
Let $I$ be and ideal of $\O_s$. Then, with the previous notation,
$$I\cap\m^{m+l}=(I\cap \m^l)\m^m \text{ for all }m\geq 0$$ if and only if $r(l)\geq 1$ and  $f_j\in \m^{|\a_j|-|\a_1|}f_1+\cdots+\m^{|\a_j|-|\a_{r(l)}|}f_{r(l)}$, for $j=r(l)+1,...,p$.
 \end{prop}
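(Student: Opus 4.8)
The engine of the argument is the division theorem attached to the order $>$ (Grauert--Hironaka division), and the plan is to funnel both implications through one explicit formula. First I would establish, for \emph{any} standard basis $f_1,\dots,f_p$, that
$$I\cap\m^k=\sum_{i=1}^{p}\m^{\max(0,\,k-|\a_i|)}f_i\qquad\text{for every }k\in\N.$$
The inclusion $\supseteq$ is immediate since $\m^{\max(0,k-|\a_i|)}f_i\subseteq\m^{\max(|\a_i|,k)}\subseteq\m^k$. For $\subseteq$, dividing $g\in I\cap\m^k$ by the standard basis gives $g=\sum_i h_if_i$ with $\exp(h_if_i)\geq\exp(g)$, so $\ord(h_i)\geq\ord(g)-|\a_i|\geq k-|\a_i|$, using that $\O_s$ is a domain. (Termination of the reduction is ensured because $>$ is a well-order whose degree levels are finite, so the leading exponents grow without bound in degree; this is precisely the content of the division theorem.)

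For the implication $\Leftarrow$, write $q:=q(l)$ (the integer denoted $r(l)$ in the statement). The formula gives
$$I\cap\m^{l+m}=\sum_i\m^{\max(0,\,l+m-|\a_i|)}f_i,\qquad (I\cap\m^l)\m^m=\sum_i\m^{\max(m,\,l+m-|\a_i|)}f_i.$$
For $i\leq q$ one has $l+m-|\a_i|\geq m$, so the two exponents coincide; for $i>q$ the summand of $(I\cap\m^l)\m^m$ is $\m^m f_i\subseteq\m^{\max(0,l+m-|\a_i|)}f_i$, giving the automatic inclusion $(I\cap\m^l)\m^m\subseteq I\cap\m^{l+m}$. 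For the reverse inclusion I would substitute the hypothesis $f_i\in\sum_{t\leq q}\m^{|\a_i|-|\a_t|}f_t$ (for $i>q$) into the high-index summands of $I\cap\m^{l+m}$: since $\max(0,l+m-|\a_i|)+|\a_i|\geq l+m$, each such summand lands in $\sum_{t\leq q}\m^{l+m-|\a_t|}f_t$, which is exactly the $i\leq q$ part of $(I\cap\m^l)\m^m$. Hence the two ideals agree.

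The implication $\Rightarrow$ carries the real content. That $q\geq1$ is a Nakayama argument as in Theorem \ref{main}: if every $|\a_i|>l$ then $I\subseteq\m^{l+1}$, whence $I=I\cap\m^{l+1}=(I\cap\m^l)\m=I\m$ and $I=(0)$, a contradiction. Next, for each $j>q$ I apply the hypothesis with $m=|\a_j|-l$ to get $f_j\in I\cap\m^{|\a_j|}=(I\cap\m^l)\m^{|\a_j|-l}$, and expanding by the formula yields
$$f_j=\sum_{i\leq q}c_{ji}f_i+\sum_{i>q}d_{ji}f_i,\qquad c_{ji}\in\m^{|\a_j|-|\a_i|},\ \ d_{ji}\in\m^{|\a_j|-l}.$$
The main obstacle is to erase the unwanted terms with $i>q$ and keep only the indices $i\leq q$. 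I would assemble these relations (one per $j>q$) into a matrix identity $(\mathrm{Id}-D)\,\mathbf f_{>q}=C\,\mathbf f_{\leq q}$ with $D=(d_{ji})_{i,j>q}$. Because $|\a_j|-l\geq1$, every entry of $D$ lies in $\m$, so $\det(\mathrm{Id}-D)$ is a unit and $N:=(\mathrm{Id}-D)^{-1}$ exists over $\O_s$; the identity $N=\mathrm{Id}+DN$ then shows immediately that $N_{jj}$ is a unit while $N_{ji'}\in\m^{|\a_j|-l}$ for $i'\neq j$. Substituting $\mathbf f_{>q}=NC\,\mathbf f_{\leq q}$, the coefficient of $f_i$ ($i\leq q$) in $f_j$ is $\sum_{i'>q}N_{ji'}c_{i'i}$: the diagonal term $N_{jj}c_{ji}$ already lies in $\m^{|\a_j|-|\a_i|}$, and each off-diagonal term picks up an extra factor in $\m^{|\a_{i'}|-l}$ with $|\a_{i'}|>l$, hence lands in $\m^{|\a_j|-|\a_i|+1}$. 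Therefore $f_j\in\sum_{i\leq q}\m^{|\a_j|-|\a_i|}f_i$, as required. This bounded Gaussian elimination, together with the order bookkeeping through $N$, is the crux; it is the same manoeuvre used to close the proof of Theorem \ref{main}.
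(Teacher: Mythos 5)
The paper does not prove this proposition at all---it is imported from Wang \cite{Wa}---so there is no internal proof to compare yours against; judged on its own merits, your argument is correct and complete modulo the Grauert--Hironaka division theorem, which is exactly the background the paper takes for granted. Your master formula $I\cap\m^k=\sum_{i=1}^{p}\m^{\max(0,\,k-|\a_i|)}f_i$ is valid: division of $g\in I\cap\m^k$ by the standard basis has zero remainder (a nonzero remainder would lie in $I$ yet have exponent outside $\bigcup_i(\a_i+\N^s)$), and degree-compatibility of the order turns $\exp(h_if_i)\geq \exp(g)$ into $\ord(h_i)\geq k-|\a_i|$. The $\Leftarrow$ computation and the Nakayama step giving $r(l)\geq 1$ (implicitly assuming $I\neq (0)$, the only sensible reading of the statement) both check out. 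The crux---erasing the indices $i>q$ from $f_j=\sum_{i\leq q}c_{ji}f_i+\sum_{i>q}d_{ji}f_i$---is handled correctly: the row-$j$ entries of $D$ lie in $\m^{|\a_j|-l}\subseteq\m$, so $\mathrm{Id}-D$ is invertible over $\O_s$, and the identity $N=\mathrm{Id}+DN$ yields the row-wise estimates $N_{jj}\in 1+\m^{|\a_j|-l}$ and $N_{ji'}\in\m^{|\a_j|-l}$ for $i'\neq j$; the extra factor $c_{i'i}\in\m^{|\a_{i'}|-|\a_i|}$ with $|\a_{i'}|\geq l+1$ then pushes every off-diagonal contribution into $\m^{|\a_j|-|\a_i|+1}$, so the total coefficient of $f_i$ lands in $\m^{|\a_j|-|\a_i|}$ as required. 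This is the same unit-inversion/Gaussian-elimination manoeuvre the paper uses to close the proof of Theorem \ref{main}, and your bookkeeping of the orders through $N$ is, if anything, more careful than the paper's. Two cosmetic remarks: the $r(l)$ of the statement is the $q(l)$ of the preceding notation (you identified them correctly), and your parenthetical justification of ``termination'' of the division is loose---what you actually need, and should simply cite, is the division theorem for $\O_s$ (formal or convergent), which delivers the inequality $\exp(h_if_i)\geq\exp(g)$ directly.
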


\begin{corollary}\label{case2}
Let $I$ be an ideal of $\O_s$. Then there exists a polynomial function in $n$, denoted by $P$, such that for all integer $n\geq 0$ and any minimal standard basis $f_1,...,f_{p_n}$ of $ I^n$ with respect to $\ini_>$ we have $\ord(f_i)\leq P(n)$ for $1\leq i\leq p_n$.
\end{corollary}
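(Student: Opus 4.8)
The plan is to transport the problem to the associated graded ring $G(\O_s)=\k[x_1,\dots,x_s]$ and there turn it into an estimate on Gröbner basis degrees. First I would observe that, since the order $>$ compares total degree first, $\ini_>(f)$ is a term of minimal degree of $f$; hence $|\exp(f)|=\ord(f)$ and $\exp(f)=\exp(f^*)$, where $f^*$ is the leading form of $f$. Extending $>$ to a (degree-refining) monomial order on $G:=G(\O_s)$, and using that the degree-$d$ part of $(I^n)^*$ is $(I^n\cap\m^d+\m^{d+1})/\m^{d+1}$ — so that every nonzero homogeneous element of $(I^n)^*$ is a leading form $g^*$ with $g\in I^n$ — I obtain the identification of the exponent semigroups
$$\{\exp(g):g\in I^n\}=\ini_>\!\big((I^n)^*\big).$$
Thus each $\ord(f_i)=|\a_i|$ is exactly the degree of a minimal generator of the initial ideal $\ini_>((I^n)^*)$, and it suffices to bound those degrees.

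Next I would feed in Theorem \ref{main}, which applies since $\O_s$ is Noetherian local: it gives $\l$ with $\ord(f_i)\le\l n$ for every minimal $\m$-adic standard basis of $I^n$, equivalently the homogeneous ideal $(I^n)^*\subseteq G$ is minimally generated in degrees $\le D:=\l n$. The corollary is therefore reduced to the following purely graded assertion: \emph{for a homogeneous ideal of $\k[x_1,\dots,x_s]$ generated in degree $\le D$, the minimal generators of its initial ideal with respect to a degree-refining order have degree bounded by a quantity that is polynomial in $D$ once $s$ is fixed.} This is a classical effective bound on Gröbner basis degrees (for instance the bound of Dubé, of the shape $2(D^2/2+D)^{2^{s-2}}$, which for $s$ fixed is polynomial in $D$ of degree $2^{s-1}$); substituting $D=\l n$ produces a bound polynomial in $n$, which I take as $P$.

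The step I expect to be the crux is this last one: controlling how much degrees can grow when passing from generators of $(I^n)^*$ to generators of its initial ideal. This is precisely the leading-term cancellation that creates new standard basis elements, and it is what makes the bound here polynomial rather than linear as in Theorem \ref{main}; the polynomial character survives only because $s$ is fixed. I would remark that Corollary \ref{A-R} together with Proposition \ref{b-m} give a more self-contained handle on the same phenomenon: from $I^n\cap\m^{m+\l n}=(I^n\cap\m^{\l n})\m^m$ for all $m\ge 0$, Proposition \ref{b-m} forces every standard basis element $f_j$ with $\ord(f_j)>\l n$ to lie in $\sum_{\ord(f_i)\le\l n}\m^{\ord(f_j)-\ord(f_i)}f_i$. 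Extracting an actual degree bound from this membership, however, again requires analysing the cancellation of leading terms, so the quantitative heart of the argument remains the effective Gröbner-degree estimate.
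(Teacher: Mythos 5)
Your proof is correct, and its skeleton is the same as the paper's: a linear-in-$n$ bound on the degrees of generators of $(I^n)^*$, followed by a classical effective degree bound for Gr\"obner bases of ideals of $\k[x_1,\dots,x_s]$ generated in bounded degree (you cite Dub\'e, the paper cites M\"oller--Mora \cite{M-M}; both are polynomial in the generating degree once $s$ is fixed). Where you genuinely diverge is the middle step. The paper never invokes Theorem \ref{main} here: it applies Corollary \ref{A-R} (with $J=(0)$, $\mfk{a}=\m$) to get $I^n\cap\m^{m+\l n}=(I^n\cap\m^{\l n})\m^m$ for all $m\geq 0$, and then Wang's criterion (Proposition \ref{b-m}) to conclude that every standard basis element $f_j$ with $|\a_j|>\l n$ lies in $\m^{|\a_j|-|\a_1|}f_1+\cdots+\m^{|\a_j|-|\a_r|}f_r$; this simultaneously shows that $(I^n)^*$ is generated in degrees $\le\l n$ and that $f_1^*,\dots,f_{p_n}^*$ is a minimal Gr\"obner basis of $(I^n)^*$, to which the degree bound is applied. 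You instead treat Theorem \ref{main} as a black box, which is legitimate: minimal $\m$-adic standard bases of $I^n$ exist because, as you observe, every nonzero homogeneous element of $(I^n)^*$ is a leading form, so Theorem \ref{main} says exactly that $(I^n)^*$ is minimally generated in degrees $\le\l n$; and your identification of the orders $|\a_i|$ with the degrees of the minimal generators of the monomial ideal $\ini_>\big((I^n)^*\big)$ replaces the paper's use of Proposition \ref{b-m} entirely. What your route buys: Corollary \ref{case2} becomes a formal consequence of the paper's own main theorem plus a standard Gr\"obner estimate, with no appeal to Wang's result. What the paper's route buys: explicit membership relations, and the extra information that the whole minimal standard basis yields a minimal Gr\"obner basis of $(I^n)^*$. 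One convention to nail down in a final write-up: $\ini_>$ selects the $>$-\emph{smallest} term, so on homogeneous polynomials it is the leading term for the genuine term order ``degree first, then the reverse of lex''; that is the degree-refining order to which Dub\'e's bound should be applied (the paper glosses this the same way by calling it the graded lexicographic order).
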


\begin{proof}
Let $f_1,..., f_{p_n}$ be a minimal standard basis of $I^n$ with respect to $\ini_>$. Let $\a_i:=\exp(f_i)$, $1\leq i\leq p_n$ and let us assume that $\a_1\leq \a_2\leq \cdots\leq \a_{p_n}$. The sequence, $\a_1,...,\,\a_{p_n}$ is uniquely determined by $I^n$. By applying proposition \ref{b-m} and corollary \ref{A-R}, we see that there exists $\l\geq 0$, not depending on $n$, such that $|\a_1|\leq |\a_2|\leq\cdots\leq |\a_r|\leq \l n<|\a_{r+1}|\leq \cdots\leq |\a_{p_n}|$
$$\text{ and } f_i\in \m^{|\a_i|-|\a_1|}f_1+\cdots+\m^{|\a_i|-|\a_r|}f_{r}\ \ \ \text{ for } r+1\leq i\leq p_n.$$
In particular $(f_1^*,...,f_r^*)$ is a system of generators of $(I^n)^*$, and $(f_1^*,....,f_{p_n}^*)$ is a Gr\"obner basis of the homogeneous ideal $(I^n)^*$ with respect to the graded lexicographic order. From \cite{M-M}, $\ord(f_i^*)$ is bounded by a polynomial function in $\l n$ depending only on $I$ and $s$, for $r+1\leq i\leq p_n$. This proves the corollary.

\end{proof}

The author would like to thank  Irena Swanson for having taken time to answer his many questions about the subject.

\bibliographystyle{amsplain}

\end{document}